\theoremstyle{plain}
\newtheorem{thm}{Theorem}[section]
\newtheorem{lem}[thm]{Lemma}
\newtheorem{prop}[thm]{Proposition}
\theoremstyle{definition}
\newtheorem{defn}[thm]{Definition}
\def\vdvh{\vrule height 12pt depth 6pt width 0.5pt}
\def\vdq{\vdvh\ }
\begin{document}

 
\title[On the action of the Steenrod-Milnor operations]{On the action of the  Steenrod-Milnor operations\\ on the invariants of  the general linear groups }

 \author{Nguyen Thai Hoa, Pham Thi Kim Minh, Nguyen Sum}

\vskip2.5cm

\maketitle

\section{Introduction}

Let  $p$ be an odd prime number. Denote by $GL_n  =  GL(n,\mathbb F_p)$ the general linear  group over the prime field $\mathbb F_p$. Each subgroup of $GL_n$ acts on  the algebra $P_n=E(x_1,\ldots,x_n)\otimes \mathbb F_p(y_1,\ldots,y_n)$ in the usual manner. Here and in what follows, $E(.,\ldots,. )$ and $\mathbb F_p(.,\ldots,.)$ are the exterior and polynomial algebras over $\mathbb F_p$ generated by the indicated variables. We grade $P_n$ by assigning $\dim x_i=1$ and $\dim y_i=2.$

Dickson showed in \cite{1} that the invariant algebra $\mathbb F_p(y_1,\ldots,y_n)^{GL_n}$ is a polynomial algebra generated by the Dickson invariants $Q_{n,s},\ 0\le s<n$.  Hu\`ynh M\`ui \cite{2,3} computed the invariant algebras  $P_n^{GL_n}$. He proved that $P_n^{GL_n}$ is generated by  $Q_{n, s},\ 0 \le  s <  n,\ R_{n, s_1,  \ldots, s_k},\  0 \le s_1 < \ldots < s_k < n.$ Here $R_{n, s_1,  \ldots, s_k}$ are M\`ui invariants and $Q_{n,s}$ are  Dickson invariants (see Section 2). 

Let $\mathcal A_p$ be the mod $p$ Steenrod algebra and let
$\tau_s, \xi_i$ be the Milnor elements of dimensions $2p^s-1,\
2p^i-2$ respectively in the dual algebra $\mathcal A_p^*$ of $\mathcal A_p$.
In \cite{7}, Milnor showed that as an algebra,
$$\mathcal A_p^* = E(\tau_0,\tau_1,\ldots )\ \otimes\ \mathbb F_p(\xi_1,\xi_2,\ldots ). $$
Then $\mathcal A_p^*$ has a basis consisting of all monomials
    $$\tau_S\xi^R \ =\ \tau_{s_1}\ldots \tau_{s_k}\xi_1^{r_1}\ldots
     \xi_m^{r_m},$$
with $S = (s_1,\ldots ,s_k),\ 0 \le s_1 <\ldots <s_k ,
 R = (r_1,\ldots ,r_m),\ r_i \ge 0 $. Let $St^{S,R} \in \mathcal A_p$
denote the dual of $\tau_S\xi^R$ with respect to that basis.
Then $\mathcal A_p$ has a basis consisting of all operations $St^{S,R}$. For $S=\emptyset, R=(r)$, $St^{\emptyset, (r)}$ is nothing but the Steenrod operation $P^r$. So, we call $St^{S,R}$ the Steenrod-Milnor operation of type $(S,R)$.

We have the Cartan formula
$$St^{S,R}(uv) =\sum_{\overset{\scriptstyle{S_1\cup S_2=S}}{R_1+R_2=R}}St^{S_1,R_1}(u)St^{S_2,R_2}(v),$$
where $ R_1=(r_{1i}),\ R_2=(r_{2i}),\ R_1+R_2=(r_{1i}+r_{2i}), S_1\cap S_2=\emptyset, u,v\in P_n$ (see M\`ui \cite{3}). 

We denote $ St_u=St^{(u),(0)},\ St^{\Delta_i}=St^{\emptyset,\Delta_i}$, where $\Delta_i=(0,\ldots,$ $1,\ldots,0)$ with 1 at the $i$-th place.  In  \cite{3}, Hu\`ynh M\`ui proved that as a coalgebra, $$\mathcal A_p =\Lambda(St_0,St_1,\ldots)\otimes \Gamma(St^{\Delta_1},St^{\Delta_1},\ldots ).$$ Here, $\Lambda(St_0,St_1,\ldots)$ (resp. $\Gamma(St^{\Delta_1},St^{\Delta_2},\ldots )$) denotes the exterior (resp. polynomial) Hopf algebra with divided powers generated by the  primitive Steenrod-Milnor operations $St_0,\ \!St_1,\ldots$ (resp. $St^{\Delta_1}, St^{\Delta_2},\ldots )$. 

The Steenrod algebra  $\mathcal A_p$ acts on $P_n$ by means of the Cartan formula together with the relations 
\begin{align*} 
St^{S,R}x_k&=\begin{cases} x_k, & S=\emptyset,\ R=(0),\\
y_k^{p^u}, &S=(u),\ R=(0), \\ 
0,  &otherwise, 
\end{cases}\\
St^{S,R}y_k&=\begin{cases}
 y_k, & S=\emptyset ,\ R=(0),\\
y_k^{p^i},&S=\emptyset,\ R=\Delta_i, \\ 
0, &otherwise,  \end{cases}
\end{align*} 
for $k=1,\ \! 2, \ldots, n$ (see Steenrod-Epstein \cite{13}, Sum \cite{11}). 
Since this action commutes with  the action of $GL_n$, it induces  an actions of $\mathcal A_p$ on $P_n^{GL_n}$.

The action of $St^{S,R}$ on the modular invariants of subgroups of general linear group has partially been studied by many authors. This action for $S=\emptyset,\ \! R=(r)$ was explicitly determined by Smith-Switzer \cite{12},  Hung-Minh \cite{5}, Kechagias \cite{4}, Sum \cite{11,20,21,22}. Smith-Switzer \cite{12}, Wilkerson \cite{14} have studied the action of $St^{\Delta_i}$ on the Dickson invariants.

The purpose of the paper is to compute the action of the Steenrod-Milnor operations on the generators of $P_2^{GL_2}$. More precisely, we explicitly determine the action of $St^{(i,j)}$ on the Dickson invariants $Q_{2,0}$ and $Q_{2,1}$.

The analogous results for $p = 2$ have been anounced in \cite{9}.

The rest of the paper contains two sections. In Section 2, we recall some needed information on the invariant theory. In Section 3, we compute the action of  the Steenrod-Milnor operations on the Dickson invariants. 

\section{Preliminaries}

\begin{defn} 
Let $(e_{k+1 },\ldots,e_n),\ 0  \leq   k <  n$, be a sequence of non-negative integers.  Following Dickson \cite{1}, M\`ui \cite{2}, we define  
$$ [k;e_{k+1},  \ldots,  e_n]  =  \frac 1{k!}
\vmatrix x_1&\cdots &x_n\\
  \vdots&\cdots &\vdots\\
  x_1&\cdots &x_n\\
  y_1^{p^{e_{k+1}}}&\cdots &y_m^{p^{e_{k+1}}}\\
  \vdots&\cdots  &\vdots\\
   y_1^{p^{e_n}} & \cdots & y_m^{p^{e_n}}
   \endvmatrix, $$
 in which there are exactly $k$ rows of $(x_1 \ldots x_n)$. (See the accurate meaning of the
determinants in a commutative graded algebra in M\`ui n \cite{2}). For $k=0$, we write
$$[0;e_{1},  \ldots,  e_n] =  [e_{1},  \ldots,  e_n] =\det(y_i^{p^{e_j}}).$$
In particular, we set
 \begin{align*} L_{n,s}&=[0,1,  \ldots,\hat s,\ldots,  n],\ \! 0\le s  \le n,\\ 
L_n &=L_{n,n}=[0,1,\ldots,n-1].
\end{align*}
Each $[k;e_{k+1},  \ldots,  e_n]$ is an invariant of the special linear group $SL_n$ and $[e_{1},  \ldots,  e_n]$ is divisible by $L_n$. Then, Dickson invariants $Q_{n,s},\ 0 \le s < n$,  are defined by
$$ Q_{n,s}=L_{n,s}/L_n.$$
Here, by convention, $L_0=[\emptyset] =1.$
\end{defn}

Now we recall the following which will be used in the next section.

\begin{lem}[Sum \cite{20}]
The actions of  $St^R$  on $L_2,L_{2,0},L_{2,1}$ for
$\ell(R)=2$ are respectively  given in the following tables:

\medskip
{\begin{tabular}{cllcl}
 $(i,j)$ &\vdq $St^{(i,j)}L_2 $ &\vdq& $(i,j)$ &\vdq $St^{(i,j)}L_2 $\cr
\hline                                    
$(0,0)$ &\vdq $L_2 $ &\vdq& $(0,1) $&\vdq $-L_2Q_{2,0}$ \cr
$(0,p) $&\vdq $L_2(Q_{2,1}^{p+1}-Q_{2,0}^p)$ & \vdq& $(0,p+1)$ &\vdq $L_2Q_{2,0}^{p+1}$\cr
 $(1,p) $&\vdq $L_2Q_{2,0}Q_{2,1}^p$ &\vdq& $(p,0)$ &\vdq $L_2Q_{2,1}$\cr
$(p+1,0)$ &\vdq $L_2Q_{2,0}$ &\vdq&\text{otherwise}&\vdq 0,\cr
\end{tabular}} 

\medskip
{\begin{tabular}{cllcl}
 $(i,j)$ &\vdq $St^{(i,j)}L_{2,0} $ &\vdq& $(i,j)$ &\vdq $St^{(i,j)}L_{2,0} $\cr
\hline                                            
$(0,0)$ &\vdq $L_2Q_{2,0}$ &\vdq &$(0,p)$ &\vdq $-L_2Q_{2,0}^{p+1} $ \cr
$(0,p^2)$ &\vdq $L_2(Q_{2,0}Q_{2,1}^{p^2+p}-Q_{2,0}^{p^2+1})$ &\vdq& $(0,p^2+p)$ &\vdq $L_2Q_{2,0}^{p^2+p+1}$ \cr
$(p,p^2) $&\vdq $L_2Q_{2,0}^{p+1}Q_{2,1}^{p^2}$ &\vdq & $(p^2,0)$ &\vdq $L_2Q_{2,0}Q_{2,1}^p$ \cr
$(p^2+p,0)$ &\vdq $L_2Q_{2,0}^{p+1}$ &\vdq& \text{otherwise} &\vdq 0,\cr
\end{tabular}} 

\medskip
{\begin{tabular}{cl}
$(i,j)$ &\vdq $St^{(i,j)}L_{2,1}$ \cr
\hline                                             
$(0,0) $&\vdq $L_2Q_{2,1}$ \cr 
$(0,p^2)$ &\vdq $L_2(Q_{2,1}^{p^2+p+1}-Q_{2,0}^pQ_{2,1}^{p^2}-Q_{2,0}^{p^2}Q_{2,1})$\cr
$(0,p^2+1)$ &\vdq $L_2Q_{2,0}^{p+1}Q_{2,1}^{p^2}$ \cr 
$(1,0)$ &\vdq $L_2Q_{2,0}$\cr
$(1,p^2)$ &\vdq $L_2(Q_{2,0}Q_{2,1}^{p^2+p}-Q_{2,0}^{p^2+1})$ \cr 
$(p^2,0)$ &\vdq $L_2(Q_{2,1}^{p+1}-Q_{2,0}^p)$ \cr
$(p^2,1)$ &\vdq $L_2Q_{2,0}^{p+1}$ \cr 
$(p^2+1,0)$ &\vdq $L_2Q_{2,0}Q_{2,1}^{p}$\cr
\text{otherwise} &\vdq 0.\cr
\end{tabular}} 
\end{lem}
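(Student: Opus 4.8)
\emph{Proof strategy.} First I would record the three invariants as explicit polynomials, namely $L_2=y_1y_2^{p}-y_2y_1^{p}$, $L_{2,0}=y_1^{p}y_2^{p^2}-y_2^{p}y_1^{p^2}$ and $L_{2,1}=y_1y_2^{p^2}-y_2y_1^{p^2}$. The key device is the total operation $T=\sum_{R}St^{R}\,w^{R}$, where $w^{R}=w_1^{r_1}w_2^{r_2}\cdots$ and the $w_i$ are commuting indeterminates that commute with $P_2$; by the Cartan formula (with $S=\emptyset$) this $T$ is a ring homomorphism from $P_2$ into the formal power series ring $P_2[[w_1,w_2,\dots]]$, and $St^{(i,j)}u$ is exactly the coefficient of $w_1^{i}w_2^{j}$ in $T(u)$ (all higher $w$'s set to $0$). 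From the relations $St^{\Delta_i}y_k=y_k^{p^i}$ one gets $T(y_k)=z_k$ with $z_k=y_k+y_k^{p}w_1+y_k^{p^2}w_2$ once $w_3,w_4,\dots$ are discarded. Since $T$ is additive and multiplicative, it carries a $2\times2$ determinant to the determinant of the transformed entries, so $T(L_2)=z_1z_2^{p}-z_2z_1^{p}$, $T(L_{2,0})=z_1^{p}z_2^{p^2}-z_2^{p}z_1^{p^2}$, and $T(L_{2,1})=z_1z_2^{p^2}-z_2z_1^{p^2}$, where the Frobenius powers $z_k^{p^a}$ are computed termwise in characteristic $p$.

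The next step is purely formal expansion: each $z_k^{p^a}$ is a trinomial in $1,w_1^{p^a},w_2^{p^a}$, so each of the three products above is a sum of at most nine monomials in $w_1,w_2$, and subtracting the two halves leaves a short, explicit list of coefficients, each of the form $y_1^{p^a}y_2^{p^b}-y_2^{p^a}y_1^{p^b}=\det\!\left(\begin{smallmatrix}y_1^{p^a}&y_2^{p^a}\\ y_1^{p^b}&y_2^{p^b}\end{smallmatrix}\right)$. The monomials $w_1^{i}w_2^{j}$ that actually occur are precisely those listed in the tables, which accounts for the ``otherwise $0$'' rows. To rewrite each surviving coefficient in terms of $L_2,Q_{2,0},Q_{2,1}$ I would use the Dickson reduction identity coming from the vanishing of the $3\times3$ determinant with columns $(y_i,y_i^{p},y_i^{p^2})$ and $(X,X^{p},X^{p^2})$: expanding along the last column gives $L_2X^{p^2}-L_{2,1}X^{p}+L_{2,0}X$, which vanishes for every $X\in\langle y_1,y_2\rangle$, whence, after dividing by $L_2$, \[ y_k^{p^2}=Q_{2,1}\,y_k^{p}-Q_{2,0}\,y_k. \] Applying this identity together with its Frobenius twists reduces every power $y_k^{p^a}$ ($a\ge2$) to an $\mathbb F_p[Q_{2,0},Q_{2,1}]$-combination of $y_k^{p}$ and $y_k$; substituting into the $2\times2$ determinants and using the alternating, multilinear property of the determinant in its two rows collapses each coefficient to an explicit multiple of $L_2$ (since $\det(y_i^{p},y_i)=-L_2$ and $\det(y_i,y_i)=0$).

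The conceptual reason the quotient lands in $\mathbb F_p[Q_{2,0},Q_{2,1}]$ is that $St^{(i,j)}$ commutes with the $GL_2$-action and $g\cdot L_2=\det(g)L_2$, so $St^{(i,j)}L_2/L_2$ is a genuine $GL_2$-invariant, hence a polynomial in the Dickson generators; the same holds for $L_{2,0},L_{2,1}$, which differ from $L_2$ by the $GL_2$-invariants $Q_{2,0},Q_{2,1}$. I expect the only real labour, and the place where errors creep in, to be the last reduction step for the $L_{2,0}$ and $L_{2,1}$ tables: there one must reduce $y_k^{p^3}$ and $y_k^{p^4}$ by iterating the identity two or three times and then recognize the resulting polynomial in $Q_{2,0},Q_{2,1}$ --- for instance the coefficient $y_1y_2^{p^4}-y_2y_1^{p^4}$ collapses to $L_2\big(Q_{2,1}^{p^2+p+1}-Q_{2,0}^{p}Q_{2,1}^{p^2}-Q_{2,0}^{p^2}Q_{2,1}\big)$. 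Organizing these reductions (and the mixed entries such as $(p,p^2)$) systematically, rather than the initial expansion, is the main obstacle.
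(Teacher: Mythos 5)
The paper contains no proof of this lemma to compare yours against: it is quoted from Sum \cite{20} (the preceding sentence reads ``Now we recall the following which will be used in the next section''), so the lemma is an external citation, not something proved internally. Judged on its own merits, your argument is correct and essentially complete. The total operation $T=\sum_R St^R w^R$ is a ring homomorphism by the Cartan formula with $S=\emptyset$; the relations $St^{\emptyset,\Delta_i}y_k=y_k^{p^i}$ give $T(y_k)=y_k+y_k^pw_1+y_k^{p^2}w_2$ after setting $w_3=w_4=\dots=0$, which is legitimate because the lemma only concerns $\ell(R)=2$; and Frobenius powers are computed termwise in characteristic $p$. Expanding $z_1z_2^{p^a}-z_2z_1^{p^a}$ (for $a=1,2$) and $z_1^pz_2^{p^2}-z_2^pz_1^{p^2}$ yields at most nine monomials $w_1^iw_2^j$ in each case, which is exactly what justifies every ``otherwise $0$'' row, and each surviving coefficient is a determinant of the form $[a,b]=y_1^{p^a}y_2^{p^b}-y_2^{p^a}y_1^{p^b}$. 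Your reduction identity $y_k^{p^2}=Q_{2,1}y_k^p-Q_{2,0}y_k$ (correctly derived from the vanishing $3\times 3$ determinant), iterated and Frobenius-twisted, together with bilinearity and antisymmetry of the $2\times 2$ determinant, collapses each such coefficient to the stated multiple of $L_2$; checking the entries this way (for instance $[2,3]=L_2Q_{2,0}^{p+1}$, $[1,4]=L_2(Q_{2,0}Q_{2,1}^{p^2+p}-Q_{2,0}^{p^2+1})$, $[3,4]=L_2Q_{2,0}^{p^2+p+1}$) reproduces all three tables, including signs such as $St^{(0,1)}L_2=-L_2Q_{2,0}$. This is also, in substance, the standard technique of this literature (M\`ui, Sum): apply the Cartan formula to the rows of the determinants $[e_1,e_2]$ and rewrite the resulting determinants in Dickson invariants; your generating-function packaging merely organizes that computation efficiently. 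The one thing separating your proposal from a finished proof is bookkeeping: you exhibit the method and a sample entry, but a complete write-up must run the nine-coefficient expansion for each of $L_2$, $L_{2,0}$, $L_{2,1}$, since several entries (the mixed ones like $(1,p)$, $(p,p^2)$, and those needing $L_2^{p-1}=Q_{2,0}$ implicitly, like $(0,p+1)$) are where sign or exponent errors would otherwise hide.
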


\eject
\section{Main Results}

First of all, we recall the following.
\begin{prop}[Hung-Minh \cite{5}, Sum  \cite {11}] Let $i$ be a nonnegative integer. We have 
\begin{align*} &St^{(i,0)}Q_{2,0}= \begin{cases}  Q_{2,0}^p,  &i=p^2-1,\\ 
(-1)^k\binom krQ_{2,0}^{r+1}Q_{2,1}^{k-r},& i = kp +r ,\ 0\le r \le k < p,\\
0, &\text{ otherwise},\end{cases}\\
 &St^{(i,0)}Q_{2,1}= \begin{cases}  Q_{2,1}^p,  &i=p^2-p,\\ 
(-1)^k\binom {k+1}rQ_{2,0}^{r}Q_{2,1}^{k+1-r},& i = kp +r,\ r \le k+1\le p,\\
0, &\text{ otherwise}.\end{cases}
\end{align*}
\end{prop}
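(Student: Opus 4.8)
The plan is to package the individual operations $St^{(i,0)}=P^i$ into the total Steenrod operation $P_t:=\sum_{i\ge 0}St^{(i,0)}t^i$. By the Cartan formula, $P_t$ is a ring homomorphism: expanding $St^{(i,0)}(uv)$ forces the second index to split as $0=0+0$, so $St^{(i,0)}(uv)=\sum_{a+c=i}St^{(a,0)}(u)St^{(c,0)}(v)$, whence $P_t(uv)=P_t(u)P_t(v)$. Since $Q_{2,s}L_2=L_{2,s}$, this gives $P_t(L_{2,s})=P_t(Q_{2,s})\,P_t(L_2)$, so it suffices to read the three generating functions off the $j=0$ column of the tables in the preceding Lemma and divide. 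Writing $D:=1+Q_{2,1}t^p+Q_{2,0}t^{p+1}$, I would record
\[ P_t(L_2)=L_2\,D,\qquad P_t(L_{2,0})=L_2\,Q_{2,0}\,(1+Q_{2,1}^p t^{p^2}+Q_{2,0}^p t^{p^2+p}). \]
The enabling observation is the Frobenius identity $D^p=1+Q_{2,1}^p t^{p^2}+Q_{2,0}^p t^{p^2+p}$ in characteristic $p$, which lets me rewrite the second factor as $Q_{2,0}D^p$. Dividing by $P_t(L_2)=L_2 D$ then yields the closed form $P_t(Q_{2,0})=Q_{2,0}\,D^{p-1}$.

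For the coefficient extraction I would expand $D^{p-1}=(1+Q_{2,1}t^p+Q_{2,0}t^{p+1})^{p-1}$ by the multinomial theorem. Collecting the monomial with exponents $(a,b,c)$, $a+b+c=p-1$, according to $k:=b+c$ and $r:=c$, the power of $t$ is $pk+r$ and the coefficient is $\binom{p-1}{k}\binom{k}{r}$; reducing modulo $p$ via $\binom{p-1}{k}\equiv(-1)^k$ gives
\[ D^{p-1}=\sum_{0\le r\le k\le p-1}(-1)^k\binom{k}{r}Q_{2,0}^{\,r}Q_{2,1}^{\,k-r}\,t^{pk+r}. \]
Multiplying by $Q_{2,0}$ and reading off the coefficient of $t^i$ with $i=pk+r$ reproduces the stated formula for $St^{(i,0)}Q_{2,0}$; the displayed special value at $i=p^2-1$ is exactly the top term $k=r=p-1$, and all remaining $i$ give $0$ since $\binom{k}{r}=0$ for $r>k$ and $D^{p-1}$ has $t$-degree $p^2-1$.

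The case of $Q_{2,1}$ is the delicate one, and is where I expect the main obstacle. The $j=0$ column of the third table gives $P_t(L_{2,1})=L_2\big(Q_{2,1}+Q_{2,0}t+(Q_{2,1}^{p+1}-Q_{2,0}^p)t^{p^2}+Q_{2,0}Q_{2,1}^p t^{p^2+1}\big)$, and using $D^p$ I would verify the exact factorization $P_t(L_{2,1})=L_2\,D\,[(Q_{2,1}+Q_{2,0}t)D^{p-1}-Q_{2,0}^p t^{p^2}]$, so that $P_t(Q_{2,1})=(Q_{2,1}+Q_{2,0}t)D^{p-1}-Q_{2,0}^p t^{p^2}$. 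Expanding the first summand as before produces $\sum(-1)^k\binom{k+1}{r}Q_{2,0}^{\,r}Q_{2,1}^{\,k+1-r}t^{pk+r}$, from which the formula is read off, the value at $i=p^2-p$ being the term $k=p-1$, $r=0$. The subtlety is precisely the correction term: a priori $(Q_{2,1}+Q_{2,0}t)D^{p-1}$ has $t$-degree $p^2$, overshooting the bound $p^2-p$ forced by $P_t(y_k)=y_k+y_k^p t$ together with the fact that $Q_{2,1}$ has $y$-degree $p^2-p$. I must therefore justify subtracting $Q_{2,0}^p t^{p^2}$ and check that the intervening coefficients from $t^{p^2-p+1}$ to $t^{p^2}$ vanish modulo $p$; they do, because $(Q_{2,1}+Q_{2,0}t)^p=Q_{2,1}^p+Q_{2,0}^p t^p$ is sparse and because $\binom{p}{r}\equiv 0$ for $1\le r\le p-1$. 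Keeping this boundary bookkeeping straight is the crux of the argument.
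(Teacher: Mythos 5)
Your proposal is correct, but note first that the paper itself offers no proof of this Proposition: it is recalled as a known result of Hung-Minh \cite{5} and Sum \cite{11}, so there is no internal argument to compare against; the closest thing to the paper's ``own'' method is the proof of Theorem 3.3, which proceeds by term-by-term double induction on $(k,r)$ via the Cartan formula. Your generating-function route is sound and genuinely different in spirit. The Cartan formula with second index $0$ does force $j_1=j_2=0$, so $P_t=\sum_i St^{(i,0)}t^i$ is a ring homomorphism; the identities $P_t(L_2)=L_2D$ and $P_t(L_{2,0})=L_2Q_{2,0}D^p$ follow from the $j=0$ entries of Lemma 2.2 (or directly from $P_t(y_k)=y_k+y_k^p t$) together with $L_{2,1}^p=L_2Q_{2,0}Q_{2,1}^p$ and $L_2^{p^2}=L_2Q_{2,0}^{p+1}$, i.e.\ the Dickson relation $L_2^{p-1}=Q_{2,0}$; and your key factorization checks out exactly, since $(Q_{2,1}+Q_{2,0}t)D^p-Q_{2,0}^pt^{p^2}D=Q_{2,1}+Q_{2,0}t+(Q_{2,1}^{p+1}-Q_{2,0}^p)t^{p^2}+Q_{2,0}Q_{2,1}^pt^{p^2+1}$, which is precisely the $j=0$ column of the third table. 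Cancelling the nonzerodivisor $L_2D$ in $\mathbb F_p[y_1,y_2,t]$ legitimizes both divisions, so $P_t(Q_{2,0})=Q_{2,0}D^{p-1}$ and $P_t(Q_{2,1})=(Q_{2,1}+Q_{2,0}t)D^{p-1}-Q_{2,0}^pt^{p^2}$. The coefficient extraction is also right: the multinomial expansion of $D^{p-1}$ with $\binom{p-1}{k}\equiv(-1)^k \pmod p$, Pascal's rule $\binom{k}{r}+\binom{k}{r-1}=\binom{k+1}{r}$ producing the $Q_{2,1}$ formula, and the boundary terms at $k=p-1$ disposed of by $\binom{p}{r}\equiv 0$ for $1\le r\le p-1$ together with the cancellation of the top term $Q_{2,0}^pt^{p^2}$ against your correction term --- which is exactly what the unstability bound $\deg St^{(i,0)}Q_{2,1}=0$ for $i>p^2-p$ demands. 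Compared with the inductive Cartan-formula technique the paper uses for its new results, packaging the $j=0$ operations into the multiplicative total operation $P_t$ buys closed-form identities and eliminates the induction entirely, at the cost of having to verify the two exact polynomial factorizations; this trade is available here precisely because $j=0$ is the case in which the total operation is a ring homomorphism.
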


\begin{thm}[Sum \cite{20}] Let $j$ be a nonnegative integer and $s=0,1$. Then we have
$$St^{(0,j)}Q_{2,s}= \begin{cases} 
Q_{2,0}^{r}Q_{2,s}\sum_{i=0}^k(-1)^i   \binom{k+s}{i +s}\binom
{r+i}iQ_{2,0}^{(k-i)p}Q_{2,1}^{i(p+1)},\\
\hskip2.2cm \text{ if } j=kp+r  \text{ with } 0\le  k,\ r <p,\\
0, \hskip2cm \text{otherwise}.\end{cases}$$
\end{thm}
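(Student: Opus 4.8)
The plan is to exploit the defining relation $L_{2,s}=Q_{2,s}L_2$ together with the Cartan formula, reducing the unknown action on $Q_{2,s}$ to the action on the $L$'s recorded in Lemma 2.2. Since $St^{(0,j)}=St^{\emptyset,(0,j)}$ has $S=\emptyset$ and $R=(0,j)$, every admissible splitting $R=R_1+R_2$ in the Cartan formula must keep $R_1=(0,b),\ R_2=(0,d)$ with $b+d=j$, so that
\begin{equation*}
St^{(0,j)}L_{2,s}=\sum_{d=0}^{j}St^{(0,j-d)}(Q_{2,s})\,St^{(0,d)}(L_2).
\end{equation*}
By the first table of Lemma 2.2, $St^{(0,d)}L_2$ vanishes unless $d\in\{0,1,p,p+1\}$, so dividing by $L_2$ (a nonzero element of the integral domain $\mathbb F_p(y_1,y_2)$) yields the four-term recursion
\begin{align*}
\frac{St^{(0,j)}L_{2,s}}{L_2}
&=St^{(0,j)}Q_{2,s}-Q_{2,0}\,St^{(0,j-1)}Q_{2,s}\\
&\quad+(Q_{2,1}^{p+1}-Q_{2,0}^{p})\,St^{(0,j-p)}Q_{2,s}+Q_{2,0}^{p+1}\,St^{(0,j-p-1)}Q_{2,s},
\end{align*}
whose left-hand side is given explicitly by the tables for $L_{2,s}$. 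This determines $St^{(0,j)}Q_{2,s}$ from its values at $j-1,\ j-p,\ j-p-1$, and I will prove the stated closed formula by induction on $j$.

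It is conceptually cleanest to package the recursion into the total operation $\widehat{St}=\sum_{j\ge0}St^{(0,j)}t^{j}$, which by the Cartan formula is a ring endomorphism of $\mathbb F_p(y_1,\ldots,y_n)$ determined by $\widehat{St}(y_k)=y_k+y_k^{p^2}t$. Applying $\widehat{St}$ to $L_{2,s}=Q_{2,s}L_2$ gives $\widehat{St}(Q_{2,s})=\widehat{St}(L_{2,s})/\widehat{St}(L_2)$, and Lemma 2.2 delivers the denominator in the compact factored form
\begin{equation*}
\frac{\widehat{St}(L_2)}{L_2}=1-Q_{2,0}t+(Q_{2,1}^{p+1}-Q_{2,0}^{p})t^{p}+Q_{2,0}^{p+1}t^{p+1}=(1-Q_{2,0}t)^{p+1}+Q_{2,1}^{p+1}t^{p},
\end{equation*}
using $(1-Q_{2,0}t)^{p}=1-Q_{2,0}^{p}t^{p}$ in characteristic $p$. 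In this language the vanishing assertion for $j\ge p^2$ is precisely the statement that the few-term numerators $\widehat{St}(L_{2,s})/L_2$ are divisible by $(1-Q_{2,0}t)^{p+1}+Q_{2,1}^{p+1}t^{p}$ with quotient of $t$-degree at most $p^2-1$; expanding the inverse of the denominator as a power series and extracting the coefficient of $t^{j}$ should reproduce the stated sum, the factor $\binom{r+i}{i}$ arising from the negative powers of $(1-Q_{2,0}t)$ after reduction mod $p$.

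Carrying out the induction directly, write $j=kp+r$ with $0\le r<p$. In the interior range $1\le r\le p-1$ the shifts $j-1,\ j-p,\ j-p-1$ remain in the blocks $(k,r-1),\ (k-1,r),\ (k-1,r-1)$ and the left-hand side of the recursion vanishes; after cancelling the common factor $Q_{2,0}^{r}Q_{2,s}$ and comparing coefficients of $Q_{2,0}^{(k-i)p}Q_{2,1}^{i(p+1)}$, the step reduces to the single identity
\begin{equation*}
\binom{k+s}{i+s}\binom{r+i}{i}=\binom{k+s}{i+s}\binom{r-1+i}{i}+\binom{k-1+s}{i-1+s}\binom{r-1+i}{i-1}+\binom{k-1+s}{i+s}\binom{r-1+i}{i-1},
\end{equation*}
which follows from two applications of Pascal's rule; thus the interior induction is routine.

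The real work is at the boundaries. When $r=0$ the shifts $j-1$ and $j-p-1$ drop into the top slot $r=p-1$ of the previous $k$-blocks, so the clean factorization fails; here I will use the congruence $\binom{p-1+i}{i}\equiv\delta_{i,0}\pmod p$ to collapse $St^{(0,kp-1)}Q_{2,s}$ and $St^{(0,kp-p-1)}Q_{2,s}$ to single monomials, after which Pascal's rule again closes the step (the would-be extra term $\bigl[\binom{k-2+s}{s-1}-\binom{k-1+s}{s-1}\bigr]Q_{2,0}^{kp}$ cancels for both $s=0$ and $s=1$). The genuinely delicate point is the vanishing for $j\ge p^2$: there the special values $St^{(0,p^2)}L_{2,s}$, $St^{(0,p^2+p)}L_{2,0}$ and $St^{(0,p^2+1)}L_{2,1}$ of Lemma 2.2 must cancel exactly against the $k=p-1$ and $k=p-2$ formula values, which I will simplify using the Lucas congruences $\binom{p}{i}\equiv0$ for $0<i<p$ and $\binom{p-1}{i}\equiv(-1)^{i}$. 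I expect this finite but intricate endgame---matching the inhomogeneous top-degree entries of Lemma 2.2 with the mod $p$ collapse of the binomial sums at the two highest blocks---to be the main obstacle, while the interior induction and the generating-function bookkeeping are comparatively mechanical.
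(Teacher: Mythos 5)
This theorem is not actually proved in the paper---it is recalled with attribution from Sum \cite{20}---so there is no internal proof to compare against; but your mechanism is precisely the one the paper itself uses to prove its new Theorem \ref{dl33}: apply the Cartan formula to $L_{2,s}=Q_{2,s}L_2$, feed in the tables of Lemma 2.2, and run a double induction on $(k,r)$ through the resulting recursion with coefficients $Q_{2,0}$, $Q_{2,0}^p-Q_{2,1}^{p+1}$, $Q_{2,0}^{p+1}$. Your details check out: the four-term recursion has the correct signs; the interior step does reduce, after shifting $i\mapsto i-1$ in the term coming from multiplication by $Q_{2,1}^{p+1}$, to your displayed identity, which follows from Pascal's rule applied once in $r$ and once in $k$; the collapse $\binom{p-1+i}{i}\equiv\delta_{i,0}\pmod p$ is exactly what handles $r=0$; and your discrepancy term $\bigl[\binom{k-2+s}{s-1}-\binom{k-1+s}{s-1}\bigr]Q_{2,0}^{kp}$ is the one that actually appears, and it does vanish for $s=0,1$. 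The only unexecuted portion, the vanishing for $j\ge p^2$, is a finite check as you say, but your own generating-function packaging makes it unnecessary at least for $s=0$: the $L_{2,0}$-table gives
\begin{equation*}
\frac{\widehat{St}(L_{2,0})}{L_2}=Q_{2,0}\bigl[(1-Q_{2,0}t)^{p^2+p}+Q_{2,1}^{p^2+p}t^{p^2}\bigr]=Q_{2,0}\,D(t)^p
\end{equation*}
by Frobenius, where $D(t)=\widehat{St}(L_2)/L_2$, hence $\widehat{St}(Q_{2,0})=Q_{2,0}D(t)^{p-1}$, a polynomial of $t$-degree $p^2-1$; the vanishing is then automatic, and expanding $D(t)^{p-1}$ using $\binom{p-1}{i}\equiv(-1)^i$ and $\binom{p-1-i}{m}\equiv(-1)^m\binom{i+m}{m}\pmod p$ reproduces the stated coefficients with no induction at all. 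For $s=1$ the numerator $\widehat{St}(L_{2,1})/L_2$ is not a pure power of $D(t)$, so there your recursion-plus-endgame argument is the natural route; it goes through exactly as you outline.
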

Our new results are the actions of $St^{(i,j)}$ on $Q_{2,0}$ and $Q_{2,1}$ with $i>0.$ 
\begin{thm}\label{dl33} Let $i, j$ be nonnegative integers. We have 

{\rm(i)} If $0\le k<i<p$ and $0\le r <p$ then
$$St^{(i,kp+r)}Q_{2,0}=0.$$

{\rm (ii)} If $0\le i <p$ then
$$St^{(i,ip)}Q_{2,0}=(-1)^iQ_{2,0}^{i+1}Q_{2,1}^{ip}.$$

{\rm (iii)}
$$ St^{(1,j)}Q_{2,0}= \begin{cases} -Q_{2,0}^{r+2}Q_{2,1}^p\sum_{i=0}^{k-1}(-1)^i   \big[ k  \binom
{k-1}i \binom {r+i+1}{i+1}\\ \hskip2.5cm -   \binom{k-2}{i-1}\binom{r+i}{i+1}\big]
Q_{2,0}^{(k-1-i)p}Q_{2,1}^{i(p+1)},\\
\hskip 2.2cm\text{ if } j = kp+r \text{ with } 0\le k,r<p,\\
0, \hskip2cm \text{otherwise}.\end{cases}$$ 

{\rm(iv)} If $k+1<i<p $ and $0\le r<p$ then
$$St^{(i,kp+r)}Q_{2,1}=0.$$

{\rm(v)} If $0\le k <p$ then
$$St^{(k+1,kp)}Q_{2,1}= (-1)^kQ_{2,0}^{k+1}Q_{2,1}^{kp}.$$

{\rm(vi)}
$$St^{(1,j)}Q_{2,1}= \begin{cases} 
(k+1)Q_{2,0}^{r+1}\sum_{i=0}^k(-1)^i   \binom
{k}{i} \binom {r+i}iQ_{2,0}^{(k-i)p}Q_{2,1}^{i(p+1)}\\
\hskip2.3cm \text{if $j=kp+r$ with $0\le k,\ r<p$,}\\
0, \hskip2cm \text{otherwise}.\end{cases}$$
\end{thm}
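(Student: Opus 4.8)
The plan is to leverage the factorization $L_{2,s}=Q_{2,s}L_2$ (so that $Q_{2,s}=L_{2,s}/L_2$) together with the Cartan formula. Writing $St^{(i,j)}=St^{\emptyset,(i,j)}$ and applying it to this product, the condition $S_1\cup S_2=\emptyset$ forces both $S$-parts to be empty, so that
\begin{equation*}
St^{(i,j)}L_{2,s}=\sum_{\substack{a+a'=i\\ b+b'=j}}St^{(a,b)}(Q_{2,s})\,St^{(a',b')}(L_2).
\end{equation*}
I then isolate the term with $(a',b')=(0,0)$, which is $St^{(i,j)}(Q_{2,s})\,L_2$. By the Lemma, $St^{(a',b')}L_2$ vanishes except at the seven pairs $(0,0),(0,1),(0,p),(0,p+1),(1,p),(p,0),(p+1,0)$, and in each case equals $L_2$ times an explicit polynomial in $Q_{2,0},Q_{2,1}$. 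Since $P_2$ is an integral domain I may cancel $L_2$, obtaining the master recursion
\begin{align*}
St^{(i,j)}Q_{2,s}&=A_s(i,j)+Q_{2,0}St^{(i,j-1)}Q_{2,s}-(Q_{2,1}^{p+1}-Q_{2,0}^p)St^{(i,j-p)}Q_{2,s}\\
&\quad-Q_{2,0}^{p+1}St^{(i,j-p-1)}Q_{2,s}-Q_{2,0}Q_{2,1}^pSt^{(i-1,j-p)}Q_{2,s}\\
&\quad-Q_{2,1}St^{(i-p,j)}Q_{2,s}-Q_{2,0}St^{(i-p-1,j)}Q_{2,s},
\end{align*}
where $A_s(i,j)=St^{(i,j)}L_{2,s}/L_2$ is read off directly from the Lemma. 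Every operation on the right has strictly smaller total degree than $St^{(i,j)}$, so the recursion can be resolved by induction, the base rows $j=0$ and $i=0$ being supplied by the Proposition and by the Theorem (Sum), respectively.

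I would then organize the induction to match the six assertions. For the vanishing statements (i) and (iv) I induct on the pair $(i,j)$: in the range $1\le i<p$ (resp. $2\le i\le p-1$) the inhomogeneous term $A_s(i,j)$ is $0$ by the Lemma, the operations $St^{(i-p,j)}Q_{2,s}$ and $St^{(i-p-1,j)}Q_{2,s}$ vanish because their first index is negative, and each remaining term on the right either has a negative index or is again of the shape covered by (i) (resp. (iv)) with a strictly smaller parameter; the key point is that the descent term $St^{(i-1,j-p)}Q_{2,s}$ lowers $i$ by one while keeping the inequality $k<i$ (resp. $k+1<i$) intact. Once (i) and (iv) are known, the diagonal formulas (ii) and (v) drop out of a single use of the recursion at $(i,j)=(i,ip)$ (resp. $(k+1,kp)$): every term is killed by (i)/(iv) except $-Q_{2,0}Q_{2,1}^{p}St^{(i-1,(i-1)p)}Q_{2,s}$, and this surviving term reproduces the claimed value by induction on $i$, the factor $-1$ at each step producing the sign $(-1)^{i}$ (resp. $(-1)^{k}$). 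The base cases $i=0$ in (ii) and $k=0$ in (v) come, respectively, from the identity operation and from $A_1(1,0)=Q_{2,0}$.

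Finally I treat the complete formulas (iii) and (vi), where $i=1$ is fixed. Here $St^{(1-p,j)}Q_{2,s}$ and $St^{(-p,j)}Q_{2,s}$ vanish identically, and $St^{(i-1,j-p)}Q_{2,s}=St^{(0,j-p)}Q_{2,s}$ is given in closed form by the Theorem, so the recursion becomes a pure recursion in $j$ whose only inputs are the values $St^{(1,\cdot)}Q_{2,s}$ at smaller $j$ together with the known $i=0$ values. I would substitute the conjectured closed forms of (iii) and (vi) into this recursion and verify the resulting identity term by term. I expect this verification to be the main obstacle: it reduces to a collection of binomial-coefficient identities in the summation index $i$ (Pascal's rule and Vandermonde-type convolutions, all read modulo $p$), and the bookkeeping in the resulting double sums is delicate. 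Equally delicate are the boundary values of $j$ lying in the \emph{otherwise} range, in particular $j=p^2$, where $A_1$ is nonzero yet the closed form must collapse to $0$; these must be checked separately from the Lemma. The analogous care is required at the very top of the range in (v), namely $k=p-1$ (so $i=p$), where the terms $St^{(i-p,j)}Q_{2,s}$ discarded above re-enter the recursion and their contribution, computed from the Theorem, must be shown to cancel against the diagonal term.
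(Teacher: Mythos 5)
Your master recursion is exactly the engine of the paper's own proof: the identities displayed there (for instance the expansion of $St^{(i,(k+1)p)}Q_{2,0}$ in part (i), and of $St^{(1,(k+1)p+r+1)}Q_{2,0}$ in part (iii)) are precisely your recursion with the negative-index terms and the vanishing inhomogeneous term $A_s(i,j)$ already dropped, and your organization of (i), (ii) and the double induction on $(k,r)$ for (iii) coincides with the paper's. Two remarks on execution. First, what you set aside as ``the main obstacle''---substituting the conjectured closed forms into the recursion and resolving the binomial identities---is in fact the bulk of the paper's proof of (iii); the paper carries out that Pascal-type reduction of the resulting four sums in full, and it does close, so nothing fails there, but your proposal has not done it. Second, the ``otherwise'' range needs no delicate check at $j=p^2$: since $\deg Q_{2,s}<p^2$, stability ($St^{(1,j)}x=0$ whenever $\deg x<j$) disposes of all $j\ge p^2$ at once, which is exactly how the paper handles it.

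The genuine gap is the top case of (v), $k=p-1$, i.e.\ $i=p$, and your closing sentence misidentifies the difficulty there. The re-entering term $Q_{2,1}St^{(0,(p-1)p)}Q_{2,1}$ is not the problem: it is given in closed form by the quoted theorem on $St^{(0,j)}$. The problem is that the three terms $St^{(p,p^2-p-1)}Q_{2,1}$, $St^{(p,p^2-2p)}Q_{2,1}$ and $St^{(p,p^2-2p-1)}Q_{2,1}$ appearing in the recursion at $(p,(p-1)p)$ can no longer be removed by (iv), which requires $i<p$; and that vanishing is genuinely false on the row $i=p$---by the Proposition, $St^{(p,0)}Q_{2,1}=-\binom{2}{0}Q_{2,1}^{2}\ne 0$. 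So for the claimed value (which is exactly the diagonal contribution $-Q_{2,0}Q_{2,1}^{p}St^{(p-1,(p-2)p)}Q_{2,1}$) to survive, those three row-$p$ terms must be shown to cancel the $St^{(0,(p-1)p)}$ contribution, and their values are supplied neither by parts (i)--(vi) nor by the quoted Proposition and Theorem. Your induction therefore does not close at this corner; a complete argument needs a separate computation along the row $i=p$ (or some other device). To be fair, the paper buries the same point under ``Parts iv), v) and vi) are proved by the same argument,'' but a self-contained proof cannot appeal to that, and your proposal as written leaves it unresolved.
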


\begin{proof} We prove  i) by induction on $k$. Let $k=0$. For $r=0$,   using Cartan formula we have
$St^{(i,0)}Q_{2,0}=0.$ For  $r>0$, applying the Cartan formula and the inductive hypothesis one gets
$$ St^{(i,r)}Q_{2,0}= Q_{2,0}St^{(i,r-1)}Q_{2,0}=0.$$
Part i) of the theorem holds for $k=0$. Suppose it is true for $0\le k<p-1$. For $i > k+1$,  we have 
\begin{align*} St^{(i,(k+1)p)}Q_{2,0}&=Q_{2,0}
St^{(i,kp+p-1)}Q_{2,0}
+(Q_{2,0}^p-Q_{2,1}^{p+1})St^{(i,kp)}Q_{2,0}\\
&\qquad -Q_{2,0}^{p+1}St^{(i,(k-1)p+p-1)}Q_{2,0}
-Q_{2,0}Q_{2,1}^pSt^{(i-1,kp)}Q_{2,0}\\
&=0.\end{align*}
Part i) of the theorem holds for $i>k+1$ and $r=0$.  Suppose it is true for $i > k+1$ and $0\le r<p-1$. Then using the inductive hypothesis we have
 \begin{align*} St^{(i,(k+1)p+r+1)}Q_{2,0}&=Q_{2,0}
St^{(i,(k+1)p+r)}Q_{2,0}
+(Q_{2,0}^p-Q_{2,1}^{p+1})St^{(i,kp+r+1)}Q_{2,0}\\
&\qquad -Q_{2,0}^{p+1}St^{(i,kp+r)}Q_{2,0}
-Q_{2,0}Q_{2,1}^pSt^{(i-1,kp+r+1)}Q_{2,0}\\
&=0.\end{align*}  
Part  i) of the theorem is proved.

 Now we prove Part  ii). Obviously, it is true for $i=0$. Suppose $i>0$  and Part ii) holds for $i-1$.  Using the Cartan formula and the inductive hypothesis, we have
\begin{align*} St^{(i,ip)}Q_{2,0} &= - Q_{2,0}Q_{2,1}^p  St^{(i-1,
(i-1)p)}Q_{2,0}\\  &=  -(-1)^{i-1}  Q_{2,0}Q_{2,1}^p Q_{2,0}^i
Q_{2,1}^{(i-1)p} \\
&=(-1)^{i}  Q_{2,0}^{i+1}Q_{2,1}^{ip}.\end{align*}

The proof of Part iii). It is well known that the Steenrod-Milnor operations are stable. That means $St^{(1,j)}x = 0$ for any homogeneous $x \in P_2$ with $\deg x < j$ (see M\`ui \cite{2}). Since $\deg Q_{2,0} = p^2-1 <p^2$, we have $St^{(1,j)}Q_{2,0} = 0$ for $j \geq p^2$. Suppose that $j < p^2$. Then using the $p$-adic expansion of $j$, we have
$$ j= kp + r \ \text{ for } 0 \le k, r <p.$$

Now we prove the theorem by double induction on $(k,r)$.

If $k=0$,  then using Part  i) of Theorem \ref{dl33}
we get $St^{(1,r)}Q_{2,0}=0$, so the theorem is true for $k=0$.
If $r=p-1$, then  $  \binom  {r+i+1}{i+1}=  \binom{r+i}{i+1}  =0$
in $\mathbb F_p$, so we obtain
$$St^{(1,kp+p-1)}Q_{2,0}=0.$$
If $r=0$, then the formula in the theorem becomes
$$St^{(1,kp)}Q_{2,0}=   -k
Q_{2,0}^2Q_{2,1}^p(Q_{2,0}^p-Q_{2,1}^{p+1})^{k-1}.$$
We prove this formula by induction on $k$. Suppose this formula holds for $0\le k<p-1$. For $r=0$, we have
\begin{align*} 
St^{(1,(k+1)p)}Q_{2,0}&=  (Q_{2,0}^p-Q_{2,1}^{p+1})
St^{(1,kp)}Q_{2,0}-Q_{2,0}Q_{2,1}^pSt^{(0,kp)}Q_{2,0}\\
&=-kQ_{2,0}^2Q_{2,1}^p(Q_{2,0}^p-Q_{2,1}^{p+1})^{k}
-Q_{2,0}^2Q_{2,1}^p(Q_{2,0}^p-Q_{2,1}^{p+1})^{k}\\
&=-(k+1)Q_{2,0}^2Q_{2,1}^p(Q_{2,0}^p-Q_{2,1}^{p+1})^{k}   .
\end{align*} 
So the formula holds for  $k+1$  and $r=0$. Suppose $0\le r<p-1$ and the formula holds for  $j=(k+1)p+r$. Applying the Cartan formula and the inductive hypothesis, one gets
\begin{align*} S&t^{(1,(k+1)p+r+1)}Q_{2,0}=
Q_{2,0}St^{(1,(k+1)p+r)}Q_{2,0}+ (Q_{2,0}^p-Q_{2,1}^{p+1})
St^{(1,kp+r+1)}Q_{2,0}\\   &\quad   -   Q_{2,0}^{p+1}
St^{(1,kp+r)}Q_{2,0} -Q_{2,0}Q_{2,1}^pSt^{(0,kp+r+1)}Q_{2,0}\\
\end{align*} 
\begin{align*} 
&=-Q_{2,0}^{r+3}Q_{2,1}^p\Big(k\sum_{i=0}^{k}(-1)^i \big[ \binom
{k}i \binom {r+i+1}{i+1}-   \binom{k-1}{i}\binom{r+i+2}{i+1}\\ &\quad + 
\binom {k-1}{i-1} \binom {r+i+1}{i}-  \binom{k-1}{i}\binom{r+i+1}{i+1}
\big] Q_{2,0}^{(k-i)p}Q_{2,1}^{i(p+1)}\\
&\quad+ \sum_{i=0}^{k}(-1)^i \binom ki \big[
\binom {r+i+1}{i+1}-\binom{r+i+1}{i}\big]
Q_{2,0}^{(k-i)p}Q_{2,1}^{i(p+1)}\\
& \quad-\sum_{i=0}^{k}(-1)^i \big[  \binom
{k-1}{i-1} \binom {r+i}{i+1}+\binom{k-2}{i-1}\binom{r+i+1}{i+1}\\ &\quad  +
\binom {k-2}{i-2} \binom {r+i}{i}-  \binom{k-2}{i-1}\binom{r+i}{i+1}
 \big] Q_{2,0}^{(k-i)p}Q_{2,1}^{i(p+1)}\Big)\\
&=-Q_{2,0}^{r+3}Q_{2,1}^p\sum_{i=0}^{k}(-1)^i \big[(k+1) \binom
{k}i \binom {r+i+2}{i+1}\\ &\hskip3cm-   \binom{k-1}{i-1}\binom{r+i+1}{i+1}
\big] Q_{2,0}^{(k-i)p}Q_{2,1}^{i(p+1)}.\end{align*}

Parts iv), v) and vi) are proved by the same argument as the previous one.

\end{proof}

\bigskip
This work was supported in part by a grant of Quy Nhon University Research Project.

{

\bigskip

Department of Mathematics, Quy Nhon University

170 An Duong Vuong, Quy Nhon, Binh Dinh, Viet Nam

\end{document}